\numberwithin{equation}{section}
\newtheorem{Thm}{Theorem}[section]
\newtheorem{Prop}[Thm]{Proposition}
\newtheorem{Lem}[Thm]{Lemma}
\newtheorem{Cor}[Thm]{Corollary}
\theoremstyle{definition}
\newtheorem{Rem}[Thm]{Remark}
\newcommand{\diam}{\operatorname{diam}}
\newcommand{\dist}{\operatorname{dist}}
\newcommand{\N}{\mathbb{N}}
\newcommand{\R}{\mathbb{R}}
\newcommand{\cC}{\mathcal{C}}
\newcommand{\cD}{\mathcal{D}}
\newcommand{\cH}{\mathcal{H}}
\newcommand{\defl}{\mathrel{\mathop:}=}
\title{Box-counting by H\"older's traveling salesman}
\author{Zolt\'an M. Balogh}
\author{Roger Z\"{u}st}
\date{\today}
\begin{document}

\begin{abstract}
We provide a sufficient Dini-type condition for a subset of a complete, quasiconvex metric space to be covered by a H\"older curve. This implies in particular that if the upper box-counting dimension of a set in a quasiconvex metric space is less or equal to $d \geq 1$, then for any $\alpha < \frac{1}{d}$ the set can be covered by an $\alpha$-H\"older curve. On the other hand, for each $1\leq d <2$ we give an example of a compact set $K$, in the plane, just failing the above Dini-type condition, with lower box-counting dimension equal to zero and upper box-counting dimension equal to $d$ that can not be covered by a countable collection of $\frac{1}{d}$-H\"older curves.
\end{abstract}

\keywords{Hausdorff dimension, Box-counting dimension, Fractals, H\"older maps\\
{\it 2010 Mathematics Subject Classification: 28A78, 53A04}}

\thanks{This research was partially supported by the Swiss National Science Foundation.}
\maketitle


\section{Introduction}

One way to measure the size of a set in a metric space is its dimension. The concepts of Hausdorff and box-counting dimension are especially relevant in this respect. We refer to the book of Falconer \cite{F} for an excellent overview of the subject. On the other hand, we can think of a planar set  being large, if the points of the set cannot be visited in finite time by a salesman traveling with bounded speed. These sets are unrectifiable in the sense that  they cannot be covered by a rectifiable curve. More generally, one could study the size of a set in terms of its property to be covered by a H\"older continuous curve. 

Jones gave a necessary and sufficient condition for a bounded planar set $K$ to be covered by a rectifiable curve in terms of the so called $\beta$-number $\beta(K)$ introduced in \cite{J}. To recall the definition of $\beta(K)$ let us consider first the local $\beta$-number within a square $Q$: $\beta_K(Q) \defl \frac{\omega(Q)}{\ell(Q)}$, where $\omega(Q)$ is the smallest width of a line strip that covers $Q \cap K$. Now the beta-number of $K$ is
\[
\beta^2(K) \defl \sum_Q \beta_K^2(3Q)\ell(Q) \ ,
\]
where the sum is over all dyadic squares in $\R^2$ and $3Q$ is the axis-parallel square with the same center as $Q$ but side length $3\ell(Q)$. According to the main result of \cite{J} a bounded planar set $K$ can be covered by a curve of finite length if and only if $\beta(K) < \infty$. Generalizations of this result in higher dimensions is due to Okikiolu \cite{O} and to Hilbert spaces to Schul \cite{S}.

For connected sets in the plane Bishop and Jones \cite{B-J} proved that a lower bound on the local $\beta$ numbers $\beta_K(Q)$ of a compact set $K\subset \R^2$ at all scales implies that the Hausdorff dimension of $K$ is strictly larger than $1$. A generalization of this result to the setting of metric spaces is due to Azzam \cite{A}.

Let us observe first that for disconnected sets the situation is very different. To see this consider the standard $\frac{1}{4}$ four corner Cantor set $C$ constructed as follows. The set $C$ is the intersection of compact sets $C_k$, where $C_0$ is the unit square $[0,1] \times [0,1]$ and for $k\geq 1$ the set $C_k$ is composed by a number of $4^{k}$ squares of side-length $\frac{1}{4^{k}}$. The squares in $C_{k}$ are obtained by replacing each square $Q$ in $C_{k-1}$ by its four corner squares of side-lengths $\frac{1}{4}$ of the side-length of $Q$. It is easy to check that if $Q$ is any square with side-length less than $1$ such that $Q \cap C \neq \emptyset$, then $\beta_C(3Q) \geq c$ for some $c>0$ that does not depend on $Q$. The fact that the Hausdorff dimension of $C$ is equal to $1$ is in contrast to the results of \cite{A} and \cite{B-J}. 

A naturally related question is to consider H\"older curves instead of Lipschitz ones. The problem has been recently studied by Badger and Vellis \cite{B-V} and Badger-Naples-Vellis in \cite{B-N-V}. In this note we consider the relationship  between the property of a set to be covered by a H\"older curve and box-counting dimension. To formulate our results we start with some notation. 


Let $S$ be a metric space. The $t$-dimensional Hausdorff measure of $S$ for $t \geq 0$ is defined by $\cH^t(S) \defl \lim_{\delta \downarrow 0} \cH^t_\delta(S)$, where $\cH^t_\delta(S)$ is defined for $\delta > 0$ by
\[
\cH^t_\delta(S) \defl \left\{\sum_{i=1}^\infty \diam(S_i)^t : S \subset \bigcup_{i=1}^\infty S_i, \diam(S_i) \leq \delta \right\} \ .
\]
The Hausdorff dimension is defined to be $\dim_{\rm H}(S) \defl \inf\{t\geq 0 : \cH^t(S)=0\}$. The upper and lower box-counting dimension of $S$ are defined by
\[
\underline{\dim}_{\rm box}(S) \defl \liminf_{\epsilon \downarrow 0} \frac{\log N(S,\epsilon)}{-\log \epsilon} \quad \mbox{and} \quad \overline{\dim}_{\rm box}(S) \defl \limsup_{\epsilon \downarrow 0} \frac{\log N(S,\epsilon)}{-\log \epsilon} \ ,
\]
where $N(S,\epsilon)$ is the minimal number of balls of radius $\epsilon$ needed to cover $S$. Note that in case $S \subset \R^2$ and $N'(S,\epsilon)$ is the minimal number of squares with edge length $\epsilon$ needed to cover $S$, then $C^{-1}N(S,\epsilon) \leq N'(S,\epsilon) \leq CN(S,\epsilon)$ for some constant $C \geq 1$ independent of $S$. This shows that in the above definitions we can replace $N(S,\epsilon)$ by $N'(S,\epsilon)$. For our discussion it also doesn't matter if we assume the centers of balls used to cover $S$ to be contained in $S$ or not. We refer to the book of Falconer \cite{F} for more details. 


The following simple result provides information on the behavior of box-counting dimension under a H\"older map. 

\begin{Prop}
	\label{holder_prop}
Let $0 < \alpha \leq 1$ and $f : X \to Y$ be surjective $\alpha$-H\"older map between metric spaces. Then $N(Y,L\epsilon^\alpha) \leq N(X,\epsilon)$ for some $L > 0$ and all $\epsilon > 0$. In particular, $\underline{\dim}_{\rm box}(Y) \leq \frac{1}{\alpha} \cdot \underline{\dim}_{\rm box}(X)$ and $\overline{\dim}_{\rm box}(Y) \leq \frac{1}{\alpha} \cdot \overline{\dim}_{\rm box}(X)$.
\end{Prop}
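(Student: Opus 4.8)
The plan is to prove the covering estimate $N(Y, L\epsilon^\alpha) \leq N(X,\epsilon)$ directly from the definition, and then extract the dimension bounds as a formal consequence. Suppose $X$ is covered by $N = N(X,\epsilon)$ balls $\B(x_i,\epsilon)$, $i = 1,\dots,N$. Since $f$ is surjective, the images $f(\B(x_i,\epsilon))$ cover $Y$. By the $\alpha$-H\"older estimate with constant, say, $H$, we have $d(f(x),f(x')) \leq H\, d(x,x')^\alpha$ for all $x,x' \in X$; hence $f(\B(x_i,\epsilon)) \subset \B(f(x_i), H(2\epsilon)^\alpha)$, because any two points of $\B(x_i,\epsilon)$ are within distance $2\epsilon$ of each other, so their images lie within $H(2\epsilon)^\alpha$ of $f(x_i)$. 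Setting $L \defl 2^\alpha H$ gives a covering of $Y$ by $N$ balls of radius $L\epsilon^\alpha$, which is exactly $N(Y,L\epsilon^\alpha) \leq N(X,\epsilon)$.

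The second step is purely bookkeeping with $\liminf$ and $\limsup$. Write $\delta \defl L\epsilon^\alpha$, so $\epsilon = (\delta/L)^{1/\alpha}$ and $\delta \downarrow 0$ as $\epsilon \downarrow 0$. Then
\[
\frac{\log N(Y,\delta)}{-\log \delta} \leq \frac{\log N(X,\epsilon)}{-\log \delta} = \frac{\log N(X,\epsilon)}{-\log \epsilon} \cdot \frac{-\log \epsilon}{-\log \delta} \ ,
\]
and the ratio $\frac{-\log\epsilon}{-\log\delta} = \frac{-\frac{1}{\alpha}(\log\delta - \log L)}{-\log\delta} \to \frac{1}{\alpha}$ as $\delta \downarrow 0$. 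Taking $\limsup$ (resp.\ $\liminf$) on both sides through $\delta \downarrow 0$ and using the standard fact that $\limsup(a_\delta b_\delta) \leq (\limsup a_\delta)(\lim b_\delta)$ when $b_\delta \to b > 0$ with all quantities eventually positive, we obtain $\overline{\dim}_{\rm box}(Y) \leq \frac{1}{\alpha}\overline{\dim}_{\rm box}(X)$ and likewise for the lower dimension.

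There is essentially no hard part here; the only points requiring a modicum of care are (i) noting that the diameter of a radius-$\epsilon$ ball is at most $2\epsilon$, which produces the factor $2^\alpha$ in $L$ and is harmless, and (ii) making sure the $\liminf$/$\limsup$ manipulation is valid — this needs $N(X,\epsilon) \geq 1$ and $\epsilon$ small enough that $-\log\epsilon > 0$, and the fact that reparametrizing the limit variable from $\epsilon$ to $\delta = L\epsilon^\alpha$ does not change the $\limsup$ or $\liminf$ since the map $\epsilon \mapsto \delta$ is a continuous increasing bijection of a neighborhood of $0$ onto a neighborhood of $0$. If one wanted to avoid even this much fuss, one could equally well cite the elementary fact (e.g.\ from \cite{F}) that $\overline{\dim}_{\rm box}$ and $\underline{\dim}_{\rm box}$ are unchanged if $\epsilon \downarrow 0$ is replaced by any sequence $\epsilon_n \downarrow 0$ with $\log\epsilon_{n+1}/\log\epsilon_n \to 1$, and apply it to a geometric sequence.
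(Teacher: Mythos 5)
Your proof is correct; the paper leaves this proposition as an exercise, and your argument (push a minimal $\epsilon$-cover of $X$ forward through $f$, use surjectivity and the H\"older bound to get a cover of $Y$ by balls of radius comparable to $\epsilon^\alpha$, then reparametrize the limit variable) is exactly the standard one intended. The only cosmetic remark is that the factor $2^\alpha$ is unnecessary: each point of $\B(x_i,\epsilon)$ is within $\epsilon$ of the center $x_i$, so $f(\B(x_i,\epsilon)) \subset \B(f(x_i),H\epsilon^\alpha)$ and $L = H$ already works, though your choice $L = 2^\alpha H$ is of course harmless since the statement only asks for some $L>0$.
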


The easy proof is left as an exercise to the reader. Taking $X = [0,1]$ with the usual metric, an obvious necessary condition that a metric space $Y$ can be covered by an $\alpha$-H\"older path $f : [0,1] \to Y$ is that $N(Y,\epsilon)\epsilon^\frac{1}{\alpha} \leq C$ for some $C \geq 0$ and all $0 < \epsilon \leq 1$. The following result gives a related sufficient condition for covering a set in a complete, quasiconvex metric space by a H\"older curve. Recall that a metric space $X$ is called quasiconvex if there is a constant $C_X \geq 1$ such that any two points $x,y \in X$ can be connected by a path $\gamma : [0,1] \to X$ of length $\ell(\gamma) \leq C_Xd(x,y)$.

\begin{Thm}
	\label{cover_prop}
Let $Y$ be a subset of a complete, quasiconvex metric space $X$ and assume that $N(Y,\epsilon)\epsilon^d$ satisfies a Dini condition for some $d \geq 1$, namely,
\begin{equation} \label{dini_cond}
\sum_{k \geq 0} N(Y,\epsilon_0 2^{-k})2^{-kd} < \infty
\end{equation} 
for some $\epsilon_0 > 0$. Then $Y$ can be covered by a $\frac{1}{d}$-H\"older curve in $X$.
\end{Thm}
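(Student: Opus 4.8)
The plan is to build the H\"older curve as a uniform limit of polygonal paths that successively visit the centers of the balls in a nested sequence of covers of $Y$. Concretely, for each $k \geq 0$ let $\mathcal{N}_k$ be a cover of $Y$ by $N_k \defl N(Y,\epsilon_0 2^{-k})$ balls of radius $\epsilon_0 2^{-k}$, with centers in $Y$ (as remarked in the text, this is harmless). I would first arrange the centers of $\mathcal{N}_{k}$ into a list and, using quasiconvexity, join consecutive centers by paths of controlled length; the key bookkeeping device is to organize the covers into a tree, where each ball $B$ of $\mathcal{N}_{k}$ is assigned as ``children'' those balls of $\mathcal{N}_{k+1}$ that meet $B$ and are not yet assigned to an earlier sibling. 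Since every ball of $\mathcal{N}_{k+1}$ meeting $Y$ meets some ball of $\mathcal{N}_k$, every ball gets a parent, and the tree has $\sum_k N_k$ vertices in total; by \eqref{dini_cond} weighted appropriately this is what makes the construction converge. A depth-first traversal of this tree (going down, across siblings, and back up) yields for each $k$ a closed polygonal-type path $\gamma_k$ through all centers at level $\leq k$, where each ``edge'' is replaced by a quasiconvex connecting path.

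Next I would estimate lengths and displacements. An edge of the traversal at level $k$ connects two points that both lie in (or adjacent to) balls of radius $\sim \epsilon_0 2^{-k}$ whose union is connected along the tree, so it has endpoints at distance $\lesssim \epsilon_0 2^{-k}$ and hence, by quasiconvexity, can be realized by a path of length $\lesssim C_X \epsilon_0 2^{-k}$. The traversal uses each tree edge twice, and the number of tree edges entering level $k$ is $N_k$, so the portion of $\gamma_{k}$ refining $\gamma_{k-1}$ has total length $\lesssim C_X \epsilon_0 \sum_{j \le k} N_j 2^{-j}$. To get a H\"older bound rather than merely a rectifiability bound, the point is that the ``new'' detours added when passing from level $k-1$ to level $k$ have length $\lesssim C_X \epsilon_0 N_k 2^{-k}$ \emph{and} are each of diameter $\lesssim 2^{-k}$, so they perturb the previous parametrization only locally. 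I would parametrize $\gamma_k : [0,1] \to X$ proportionally to arclength of the level-$k$ traversal, and show $\sup_t d(\gamma_k(t),\gamma_{k+1}(t)) \lesssim \sum_{j \ge k} (\text{length of level-}j\text{ detours}) \lesssim C_X\epsilon_0 \sum_{j\ge k} N_j 2^{-j}$, which tends to $0$ by \eqref{dini_cond}. Completeness of $X$ then gives a limit curve $\gamma : [0,1] \to X$ whose image contains every center, hence is dense in $Y$; since the image of a compact interval is closed, $Y \subset \gamma([0,1])$.

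Finally I would verify the H\"older estimate for $\gamma$. Given $s < t$ in $[0,1]$, choose $k$ with $2^{-(k+1)} \le t - s < 2^{-k}$ (scaled by the total length normalization). On the level-$k$ parametrization, an interval of parameter length $t-s$ corresponds to traversing detours of level $\geq k$ plus at most a bounded number of level-$\le k$ edges; using that the level-$k$ traversal ``spends'' parameter length proportional to length, a parameter interval of size $\sim 2^{-k}$ maps into a set of diameter $\lesssim \epsilon_0 2^{-k}$ coming from the level-$k$ structure, and the tail corrections $\sum_{j > k} N_j 2^{-j}$ are, by the Dini condition, also $\lesssim$ (the tail), which one checks is $o(2^{-k})$ — in fact a summable tail is bounded by a constant, and one arranges the normalization so that $d(\gamma(s),\gamma(t)) \lesssim (t-s)^{1/d}$. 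The exponent $1/d$ enters precisely here: the $k$-th cover has $N_k$ balls of radius $2^{-k}$, so a ``$1/N_k$-fraction'' of the parameter domain corresponds to spatial scale $2^{-k} \sim N_k^{-1/d} \cdot (\text{something bounded})$ when $N_k \sim 2^{kd}$, and \eqref{dini_cond} exactly controls the deviation from this borderline behavior.

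I expect the main obstacle to be the careful choice of parametrization: one must interleave the ``old'' curve and the ``new'' detours at each stage so that (a) the family $\gamma_k$ is Cauchy in the sup norm, (b) each $\gamma_k$ is H\"older with a constant \emph{uniform in $k$}, not just finite, and (c) these two requirements are compatible. Getting a $k$-uniform H\"older bound forces one to distribute the parameter interval for each level-$k$ detour in proportion to $2^{-k}$ (its spatial scale) rather than in proportion to its length $N_k 2^{-k}$; reconciling this with the length-proportional parametrization needed for (a) is the delicate point, and is where the Dini hypothesis \eqref{dini_cond}, as opposed to mere finiteness of $\sum N_k 2^{-k}$, does the work.
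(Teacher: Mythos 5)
Your tree-of-covers setup (parents via intersection, centers in $Y$, quasiconvex connecting paths, depth-first traversal) is exactly the right skeleton and matches the paper's construction. The gap is in the bookkeeping of lengths and parameters, and for $d>1$ it is fatal: you measure everything in arclength, i.e.\ with weights $N_k2^{-k}$, and claim that $\sup_t d(\gamma_k(t),\gamma_{k+1}(t)) \lesssim \sum_{j\ge k}N_j2^{-j} \to 0$ ``by \eqref{dini_cond}''. But the Dini condition controls $\sum_k N_k2^{-kd}$, not $\sum_k N_k2^{-k}$; for $d>1$ one can have $N_k\asymp 2^{kd'}$ with $1<d'<d$, so that \eqref{dini_cond} holds while $\sum_{j\ge k}N_j2^{-j}=\infty$ and the limit curve has infinite length. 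So neither your Cauchy estimate nor an arclength-proportional parametrization can work, and your proposed fix (give each level-$k$ detour parameter proportional to its spatial scale $2^{-k}$) still yields total parameter of order $\sum_k N_k2^{-k}$, which again may diverge. You correctly flag this reconciliation as the delicate point, but you do not resolve it, and resolving it is the actual content of the proof.

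The correct weight for a level-$k$ edge is $2^{-kd}$: with this choice the total parameter (equivalently, the total length of the abstract parametrizing tree) is exactly $\sum_k N_k 2^{-kd}<\infty$, which is precisely what the Dini hypothesis provides, and the exponent $\frac1d$ comes out automatically, because the map sending vertices to ball centers displaces a point along a descendant chain starting at level $l$ by at most about $\epsilon_0 2^{-l} = \epsilon_0\bigl(2^{-ld}\bigr)^{1/d} \lesssim d_T(\cdot,\cdot)^{1/d}$; splitting an arbitrary pair of vertices at their least common ancestor only doubles the constant, and quasiconvexity lets you extend along edges with the same regularity. This is how the paper proceeds: it builds a compact metric tree $T$ whose level-$k$ edges have length $2^{-kd}$ (as a Gromov--Hausdorff limit of the finite trees $T_k$), shows that the vertex-to-center map extends, by density and completeness, to a uniformly $\frac1d$-H\"older map $\varphi : T \to X$, and concludes $\bar Y \subset \varphi(T)$; since a compact tree of finite total length is the image of a Lipschitz loop (your depth-first traversal, performed once on $T$ rather than level by level), composing gives the desired $\frac1d$-H\"older curve. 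If you rerun your construction allotting parameter $2^{-kd}$ to each level-$k$ edge of the traversal, your argument goes through; with weights $2^{-k}$ it does not.
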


An immediate consequence of this is the following:

\begin{Cor}
	\label{cover_cor}
Let $X$ be a complete, quasiconvex metric space and assume that $Y \subset X$ satisfies $\overline{\dim}_{\rm box}(Y) < d$. Then $Y$ can be covered by a $\frac{1}{d}$-H\"older curve.
\end{Cor}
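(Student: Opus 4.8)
The plan is to deduce the Corollary directly from Theorem \ref{cover_prop} by verifying that the hypothesis $\overline{\dim}_{\rm box}(Y) < d$ forces the Dini condition \eqref{dini_cond}. The point is that strict inequality between the upper box-counting dimension and $d$ leaves room to insert an intermediate exponent, and the resulting gap produces geometric (hence summable) decay in the series.

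Concretely, I would first choose $d'$ with $\overline{\dim}_{\rm box}(Y) < d' < d$. By the definition of $\overline{\dim}_{\rm box}$ as a $\limsup$, there is some $\epsilon_1 > 0$ such that $\frac{\log N(Y,\epsilon)}{-\log \epsilon} \leq d'$ for all $0 < \epsilon \leq \epsilon_1$; equivalently $N(Y,\epsilon) \leq \epsilon^{-d'}$ for all such $\epsilon$. Now fix any $\epsilon_0$ with $0 < \epsilon_0 \leq \epsilon_1$. Then for every $k \geq 0$ we have $\epsilon_0 2^{-k} \leq \epsilon_1$, so
\[
N(Y,\epsilon_0 2^{-k}) \, 2^{-kd} \leq (\epsilon_0 2^{-k})^{-d'} \, 2^{-kd} = \epsilon_0^{-d'} \, 2^{-k(d - d')}.
\]
Since $d - d' > 0$, the geometric series $\sum_{k \geq 0} \epsilon_0^{-d'} 2^{-k(d-d')}$ converges, and therefore \eqref{dini_cond} holds for this $\epsilon_0$. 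Applying Theorem \ref{cover_prop} to $Y \subset X$ then yields a $\frac{1}{d}$-H\"older curve in $X$ covering $Y$.

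There is essentially no obstacle here: the entire content is the trivial observation that a polynomial bound with exponent strictly below $d$ beats the weight $2^{-kd}$ geometrically. The only mild point worth stating explicitly is that the $\limsup$ in the definition of $\overline{\dim}_{\rm box}$ gives the bound $N(Y,\epsilon) \leq \epsilon^{-d'}$ only for $\epsilon$ below some threshold, which is exactly why the Dini condition in Theorem \ref{cover_prop} is required to hold merely for \emph{some} $\epsilon_0 > 0$ rather than for all scales; restricting to a dyadic sequence starting below that threshold suffices.
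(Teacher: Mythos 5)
Your proposal is correct and follows essentially the same route as the paper: both exploit the gap between $\overline{\dim}_{\rm box}(Y)$ and $d$ (your $d'$ is the paper's $d-\delta$) to get $N(Y,\epsilon)\epsilon^d \lesssim \epsilon^{d-d'}$ for small $\epsilon$, so the dyadic series \eqref{dini_cond} is geometric and Theorem~\ref{cover_prop} applies. No issues.
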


To prove Corollary~\ref{cover_cor} we need to check that condition \eqref{dini_cond} is satisfied if $\overline{\dim}_{\rm box}(Y) < d$. Indeed, in this case there exists a $\delta >0$ such that 
\[
\log(N(Y,\epsilon)) \leq (d-\delta)\cdot \log\frac{1}{\epsilon}
\]
for all $\epsilon$ small enough. This implies that  $N(Y,\epsilon)\epsilon^d \leq C\epsilon^{\delta}$ for some fixed  constant $C > 0$ independently of $\epsilon >0$.  Hence $N(Y,\epsilon)\epsilon^d $ satisfies indeed the Dini condition in the statement of the theorem above.

Let us remark that, although our result works in the general metric setting, in the Euclidean space $\R^n$ a much stronger result is available due to Badger-Naples-Vellis \cite{B-N-V}. Here the authors proved that the condition 
\begin{equation}
	\label{badger_cond}
\sum_{k \geq 0} \tilde{N} (Y, 2^{-k})2^{-kd} < \infty \ , 
\end{equation}
is sufficient for $Y \subset \R^n$ to be covered by an $\frac{1}{d}$-H\"older curve. In the above relation we use the notation 
\[
\tilde{N} (Y, 2^{-k}) = \sharp \{Q \in \cD_k: \beta_Y(Q)\geq  \beta_0 \} \ ,
\]
where $\cD_k$ is the collection of dyadic cubes in $\R^n$ with side length $2^{-k}$ and $\beta_0>0$ is a fixed constant. 

The second result of this note is an example showing that a bound on the lower box-counting dimension does not imply the H\"older covering property. We prove the existence of a small set of vanishing Hausdorff dimension that cannot be covered by a countable union of H\"older curves. This is formulated in the following:

\begin{Thm}
	\label{main_thm}
For any $1 \leq d < 2$ there is a compact set $K^d \subset \R^2$ such that $\dim_{\rm H}(K^d) = \underline{\dim}_{\rm box}(K^d) = 0$ and
\begin{equation} \label{lim_cond}
\lim_{\epsilon \downarrow 0} N(K^d,\epsilon)\epsilon^d  = 0 
\end{equation}
but $K^d$ can not be covered by a countable collection of $\frac{1}{d}$-H\"older curves.
\end{Thm}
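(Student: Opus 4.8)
The plan is to realize $K^d=\bigcap_k F_k$ as a Moran‑type Cantor set in the unit square, where each $F_k$ is a finite disjoint union of closed squares and $F_k$ is obtained from $F_{k-1}$ by one of two kinds of subdivision step, applied in alternating long runs. In a \emph{plain step} every square of $F_{k-1}$ is replaced by its concentric sub‑square of half the side length, so the number of squares is unchanged and only the scale halves. In a \emph{spreading step} every square $Q$ of $F_{k-1}$ of side $r$ is replaced by $N$ sub‑squares of side $r/(2\sqrt N)$ placed on a square sub‑grid of $Q$ with $\lceil\sqrt N\,\rceil$ columns; then the children are pairwise disjoint, their mutual distance is at least their side length, and in fact equals it for adjacent ones. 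Let $s_1<s_2<\cdots$ index the spreading steps, let $\cM_m$ and $R_m$ be the number and side length of the squares of $F_{s_m}$, and let $N_m$ be the multiplicity used at step $s_m$. Two elementary observations drive the choice of the $N_m$'s and of the run lengths: a plain step multiplies $N(K^d,\epsilon)\epsilon^d$ by $2^{-d}$ uniformly in $\epsilon$, while a spreading step satisfies $\max_\epsilon N(K^d,\epsilon)\epsilon^d\asymp\cM_mR_m^d$, the maximum being over the band of scales it creates. I therefore fix the data recursively: the long run of plain steps preceding step $s_m$ is made long enough to drive $N(K^d,\cdot)(\cdot)^d$ down to a minuscule value, then $N_m$ is chosen so enormous that $\cM_mR_m^d\asymp\tfrac1m$, and the run of plain steps after $s_m$ is taken long enough that $\log\cM_m$ is negligible against its length (see the next paragraph). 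With this choice $N(K^d,\epsilon)\epsilon^d\lesssim\max_{m'\ge m}\tfrac1{m'}$ for all $\epsilon$ below the $m$‑th scale, so $N(K^d,\epsilon)\epsilon^d\to0$ and \eqref{lim_cond} holds, whereas $\sum_m\cM_mR_m^d\asymp\sum_m\tfrac1m=+\infty$; this divergence is the whole mechanism behind the non‑covering.

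The metric properties are then cheap. Evaluating the counting function at the end of the $m$‑th plain run gives $\log N(K^d,\epsilon)/\log(1/\epsilon)=\log\cM_m/(\log(1/R_m)+L_m\log2)$, where $L_m$ is that run's length; since $L_m$ was chosen to dominate $\log\cM_m$ and $\log(1/R_m)$, this tends to $0$, so $\underline{\dim}_{\rm box}(K^d)=0$, and hence $\dim_{\rm H}(K^d)\le\underline{\dim}_{\rm box}(K^d)=0$ by the standard inequality (see \cite{F}); compactness is immediate. Since $N(K^d,\epsilon)\epsilon^d\to0$ already forces $\overline{\dim}_{\rm box}(K^d)\le d$, and evaluating at the crossover scales $\epsilon_m^\ast\asymp R_m$, where $N(K^d,\epsilon_m^\ast)\asymp\cM_m$ and $\cM_m(\epsilon_m^\ast)^d\asymp\tfrac1m$, gives $\log N(K^d,\epsilon_m^\ast)/\log(1/\epsilon_m^\ast)\to d$, one also obtains $\overline{\dim}_{\rm box}(K^d)=d$, matching the abstract (this last point is not needed for the statement).

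For the covering obstruction the first move is Baire category: if $K^d\subseteq\bigcup_n\Gamma_n$ with each $\Gamma_n$ the image of a $\tfrac1d$‑H\"older path, then, $K^d$ being compact, some $\Gamma_n\cap K^d$ is somewhere dense in $K^d$, so there is a construction square $Q_0$ with $\emptyset\ne K^d\cap Q_0\subseteq\Gamma_n=f([0,1])$ for a $\tfrac1d$‑H\"older map $f$ of some constant $H$. It then suffices to contradict the covering of $K^d\cap Q_0$ by the single curve $f$, and the point is a multi‑scale telescoping estimate on the parameter interval $[0,1]$. If $Q\subseteq Q_0$ is one of the squares present just before step $s_m$, then $f$ must visit each of the $N_m$ children of $Q$; ordering these children by first visiting time, each of the $N_m-1$ transits between consecutive children costs a displacement $\ge R_m$ (their separation), hence a parameter length $\ge(R_m/H)^d$ by the H\"older bound. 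These transit portions of $[0,1]$ lie inside (a small enlargement of) $Q$ but outside all children of $Q$, so — crucially — they are pairwise disjoint both for the different squares $Q$ present before step $s_m$ and across the different $m$, since a given parameter value is "in a gap" at only one level of the nested structure. Summing over the $\cM_{m-1}/\cM_{m_0}$ relevant squares inside $Q_0$ and over $m>m_0$ yields
\[
1\ \ge\ \sum_{m>m_0}\frac{\cM_{m-1}}{\cM_{m_0}}\,(N_m-1)\Big(\frac{R_m}{H}\Big)^{d}\ \gtrsim\ \frac{1}{\cM_{m_0}H^{d}}\sum_{m>m_0}\cM_mR_m^{d}\ \asymp\ \frac{1}{\cM_{m_0}H^{d}}\sum_{m>m_0}\frac1m\ =\ +\infty,
\]
the desired contradiction.

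The construction arithmetic and the dimension bookkeeping are routine; the substance is the last display. Its moral is that the single fixed budget $[0,1]$ must pay, disjointly and simultaneously, for the separations created at \emph{every} spreading scale, whereas any one spreading scale in isolation only reproduces the box‑counting inequality $\cM_mR_m^d\lesssim H^d$ that is already contained in \eqref{lim_cond}. Accordingly, the main obstacle is to make the "disjointness across scales" rigorous in the presence of the curve repeatedly re‑entering and leaving a square: one must work with slightly enlarged squares and slightly shrunken children, show that the resulting family of "gap" parameter sets has uniformly bounded overlap multiplicity, and check that the transit length charged to a square is not lost when the curve momentarily exits it. It is precisely in controlling these error terms that the choice of placing the children on a grid with separation equal to their side length, i.e. $\rho=\tfrac12N^{-1/2}$ at each spreading step, is used.
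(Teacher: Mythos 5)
Your proposal is correct in outline but follows a genuinely different route from the paper. The paper keeps the branching number fixed at $4$ and instead alternates the contraction ratio (between $4^{-\gamma}$ and $4^{-n}$), proves non-coverability of $K^1$ by a rectifiable curve by invoking Jones' traveling salesman theorem (showing $\beta^2(K^1)=\infty$ via a divergent sum $\sum_n \frac1n$ of level contributions), passes to countably many curves by a nested-squares/completeness argument, and obtains the general case $1\le d<2$ by building a bi-H\"older (snowflake) bijection $F:K^d\to K^1$ of exponent $\gamma/\delta=d$, so that a $\frac1d$-H\"older cover of $K^d$ would yield a Lipschitz cover of $K^1$. You instead use sparse scales of enormous branching $N_m$ with separation comparable to the side length, and attack all $d$ at once by a direct multi-scale estimate in the parameter interval; this avoids the TST black box and the snowflake transfer (and the Lipschitz extension of $(F\circ\sigma)|_A$ that the paper needs), at the price of having to organize the "gap" sets by hand. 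One caveat: the disjointness claim as literally written in your display paragraph is false --- the full transit interval between the first-visit times of consecutive children of $Q$ need not map into an enlargement of $Q$, since the curve may leave $Q$ and interleave visits to other squares at the same or other levels, so those intervals can overlap badly across different $Q$. The repair you gesture at does work and should be made explicit: replace each transit interval by its subset on which $f$ lies in $\lambda Q\setminus\bigcup_l C_l$ (with $\lambda$ close to $1$ so that the $\lambda$-enlargements of same-level squares are disjoint and $\lambda Q'\subseteq C_l$ for every deeper square $Q'$ descending from the child $C_l$). A last-exit/first-entry argument shows this subset still has measure at least $(cR_m/H)^d$ (either the whole excursion from the last child visited before $C_{i+1}$ stays in $\lambda Q$, or the initial piece up to the first exit from $\lambda Q$ already has length $\gtrsim (R_m/H)^d$), and these sets are then genuinely pairwise disjoint across transits, squares, and scales, so your divergent sum goes through. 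With that step written out, your argument is complete and arguably more self-contained and uniform in $d$ than the paper's.
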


Note that the restriction $d < 2$ is necessary because of the existence of Peano curves in $\R^2$ that are H\"older continuous with exponent $\frac{1}{2}$. The construction of $K^d$ above can naturally be extended to $\R^n$, where the same statement is true with $1 \leq d < n$. The properties of $K^d$ in the statement of the theorem in particular imply that $\overline{\dim}_{\rm box}(K^d) = d$. Indeed, it is easy to check that $N(K^d,\epsilon) \leq C \epsilon^{-d}$ for all small $\epsilon > 0$ implies that $\overline{\dim}_{\rm box}(K^d) \leq d$. The lower bound $\overline{\dim}_{\rm box}(K^d) \geq d$ follows from Corollary~\ref{cover_cor}.

Applying the above theorem for a sequence of $d_n \to 2$ and taking the union of appropriately scaled and translated copies of $K^{d_n}$ we obtain the following corollary:

\begin{Cor}
	\label{holder_thm}
There is a compact set $K \subset \R^2$ with $\dim_{\rm H}(K) = 0$ that can not be covered by a countable collection of $\alpha$-H\"older curves for any $\alpha > \frac{1}{2}$.
\end{Cor}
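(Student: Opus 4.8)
\textbf{Proof proposal for Corollary~\ref{holder_thm}.}

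The plan is to assemble $K$ from rescaled copies of the sets $K^{d_n}$ produced by Theorem~\ref{main_thm}, choosing $d_n \uparrow 2$, and then argue that any countable family of $\alpha$-H\"older curves with $\alpha > \tfrac12$ must fail to cover at least one of the pieces. Concretely, fix a sequence $d_n \in [1,2)$ with $d_n \to 2$. For each $n$ let $K^{d_n} \subset \R^2$ be the compact set from Theorem~\ref{main_thm}; after translating and scaling by a factor $\rho_n > 0$ (with $\rho_n \to 0$ fast enough, e.g. $\rho_n \le 2^{-n}$ and the translates placed at disjoint locations accumulating only at a single point $p$), set $K_n \defl \rho_n K^{d_n} + v_n$ and $K \defl \{p\} \cup \bigcup_{n \ge 1} K_n$. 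Since each $K_n$ is compact, the $K_n$ are uniformly bounded, and the only accumulation point of $\bigcup_n K_n$ is $p$, the set $K$ is compact.

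Next I would check $\dim_{\rm H}(K) = 0$. Hausdorff dimension is invariant under the bi-Lipschitz maps $x \mapsto \rho_n x + v_n$, so $\dim_{\rm H}(K_n) = \dim_{\rm H}(K^{d_n}) = 0$ for every $n$. Hausdorff dimension is countably stable, hence $\dim_{\rm H}\big(\bigcup_n K_n\big) = \sup_n \dim_{\rm H}(K_n) = 0$, and adjoining the single point $p$ does not change this; thus $\dim_{\rm H}(K) = 0$.

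Finally, the non-covering statement. Suppose for contradiction that $K \subset \bigcup_{j=1}^\infty \Gamma_j$ where each $\Gamma_j$ is an $\alpha$-H\"older curve with a fixed $\alpha > \tfrac12$. Pick $n$ large enough that $\tfrac1{d_n} < \alpha$; this is possible since $d_n \to 2$ forces $\tfrac1{d_n} \to \tfrac12 < \alpha$. For each $j$, the image $\Gamma_j \cap K_n$, pulled back through the bi-Lipschitz rescaling, shows that $K^{d_n}$ is contained in $\bigcup_j \Gamma_j'$ where $\Gamma_j'$ is again an $\alpha$-H\"older curve (rescaling a domain interval and postcomposing with an affine map preserves the H\"older exponent, changing only the constant). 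An $\alpha$-H\"older curve is in particular $\tfrac1{d_n}$-H\"older on any compact subinterval since $\alpha > \tfrac1{d_n}$ (shrinking the exponent only improves H\"older continuity on a bounded domain), so $K^{d_n}$ would be covered by a countable collection of $\tfrac1{d_n}$-H\"older curves, contradicting the last assertion of Theorem~\ref{main_thm}. Hence no such countable family exists, and $K$ cannot be covered by countably many $\alpha$-H\"older curves for any $\alpha > \tfrac12$.

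The one point requiring a little care — and the main thing to get right — is the reduction step: verifying that intersecting a countable H\"older cover of $K$ with a single rescaled piece $K_n$ yields a countable H\"older cover of $K^{d_n}$ with the \emph{same} exponent (only the constants degrade), and that lowering the H\"older exponent from $\alpha$ to $\tfrac1{d_n}$ on a bounded parameter domain is legitimate. Both are elementary, but they are exactly where the hypothesis $\alpha > \tfrac12$ is consumed, so they should be stated explicitly rather than glossed over.
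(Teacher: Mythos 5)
Your proposal is correct and follows essentially the same route as the paper, which proves the corollary in one line by taking the union of appropriately scaled and translated copies of $K^{d_n}$ for $d_n \to 2$ and invoking Theorem~\ref{main_thm}. Your write-up merely fills in the details the paper leaves implicit (compactness via accumulation at a single point, countable stability of Hausdorff dimension, and the exponent-lowering step $\alpha > \tfrac{1}{d_n}$ on a bounded parameter domain), all of which are handled correctly.
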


As above we observe that necessarily $\overline{\dim}_{\rm box}(K) =2$. Note that since the lower box-counting dimension is not stable under unions of sets we cannot conclude that for this example $\underline{\dim}_{\rm box}(K) = 0$. However, a modified version of the construction provided in the proof of Theorem~\ref{main_thm} could give also this property. 

Theorem~\ref{main_thm} also serves as a counterexample for two definitions of rectifiability. Following the notation of Federer \cite[3.2.14]{Fed} a subset $S$ of a metric space $X$ is countably $m$-rectifiable if there exist countably many Lipschitz maps $f_i : E_i \to X$, $i \in \N$ defined on bounded sets $E_i \subset \R^m$ such that $S = \bigcup_{i \in \N} f_i(E_i)$. More generally, $S$ is countably $(\cH^m,m)$-rectifiable if $S$ is $\cH^m$-measurable and it can be expressed as a union $S = S' \cup A$ of a countably $m$-rectifiable set $S'$ and a set $A \subset X$ with $\cH^m(A) = 0$. At first glance one might think that the set $A$ in the second definition is small enough so that it can be easily covered by a countably $m$-rectifiable set too. Already in the case $d=1$, Theorem~\ref{main_thm} demonstrates that the two definitions above are different and $K^1$ although it is countably $(\cH^1,1)$-rectifiable because $\dim_{\rm H}(K^1) = 0$, it is not countably $1$-rectifiable. The existence of such sets are also guaranteed by \cite{K} as level sets of a $C^1$-map $f: [0,1]^2 \to [0,1]$.

\section{Proof of Theorem~\ref{cover_prop}}

\begin{proof}[Proof of Theorem~\ref{cover_prop}]
First note that the covering condition on $Y$ guarantees that $Y$ is totally bounded, i.e.\ for any $r > 0$ there are finitely many balls of radius $r$ that cover $Y$. Thus since $X$ is complete, the closure $\bar Y$ of $Y$ is compact.
	
For each $k \geq 0$ let $\cC_k$ be a minimal cover by closed balls with radius $\epsilon_0 2^{-k}$. Without loss of generality we assume that $\cC_0$ consists of a single ball. Otherwise we start our construction by connecting the centers of balls in $\cC_0$ by a single closed curve of finite length.

For any $k \geq 1$ and any ball $B \in \cC_k$ choose a parent ball $B' \in \cC_{k-1}$ with the property that $B' \cap B \neq \emptyset$. Such a ball exists because of the minimality of the cover $\cC_k$. Note that by this procedure we obtain for each ball a unique parent, but a given ball can have several or no children.

Let us recall that a metric tree is by definition a geodesic metric space that does not contain embedded circles. We construct a complete metric tree $T$ of finite length as follows: $T$ is obtained as a Gromov-Hausdorff limit of finite metric trees $T_k$ in which the vertices correspond to balls in $\bigcup_{0 \leq l \leq k} \cC_l$. First, $T_1$ consists of $\sharp \cC_1$ closed intervals of length $2^{-d}$ glued together at an endpoint equipped with the quotient length metric. The other endpoints of these intervalls form vertices of $T_1$ that are in one-to-one correspondence with balls in $\cC_1$. Iteratively, $T_{k+1}$ is constructed from $T_k$ by attaching to a vertex in $T_k$ that corresponds to some $B \in \cC_k$ as many intervals of length $2^{-kd}$ as $B$ has children in $\cC_{k+1}$. Due to the fact that $\sum_{k \geq 1}(\sharp \cC_k) 2^{-kd} < \infty$, the sequence $(T_k)_{k \geq 1}$ of compact metric trees has uniformly bounded total length. Thus this sequence has a limit $T$ with respect to the Gromov-Hausdorff distance by Gromov's compactness theorem, see e.g.\ \cite[Theorem~7.4.15]{B-B-I}. As such a limit $T$ is itself a compact metric tree with bounded total length. It is clear that $T$ contains an isometric copy of each $T_k$ in an obvious way.

In the next step, we successively define for each $m\geq 1$ maps $\varphi : T_m \to X$ as follows: First we define the $\varphi$ on the vertices of $T_m$. If $y_B \in T_m$ is the vertex that corresponds to $B \in \bigcup_{0 \leq k \leq m} \cC_k$, then $\varphi(y_B)$ is defined as the center $c_B$ of $B$. Let us investigate the metric distortion properties of $\varphi$. Consider two vertices $y_{B},y_C \in T_k$ such that $B \in \cC_k$, $C \in \cC_l$ and $C$ is a descendant of $B$ (so $l < k \leq m$). Then
\begin{align*}
d_X(\varphi(y_B),\varphi(y_C)) & = d_X(c_B,c_C) \leq \epsilon_0\sum_{i=l}^{k-1} 2^{-i} + 2^{-i-1} \leq 2\epsilon_0\sum_{i=l}^{k} 2^{-i} \\
 & \leq 4\epsilon_02^{-l} = 4\epsilon_0(2^{-ld})^\frac{1}{d} \leq 4\epsilon_0 d_T(y_B,y_C)^\frac{1}{d} \ .
\end{align*}
In the first inequality we used that
\[
d_X(c_{B_{i}},c_{B_{i+1}}) \leq \epsilon_02^{-i} + \epsilon_02^{-i-1}
\]
if $B_i \in \cC_i$ is the parent of $B_{i+1} \in \cC_{i+1}$ because these balls have nonempty intersection. For the last inequality note that $d_T(y_B,y_C) = \sum_{i = l}^{k-1} 2^{-id}$ by the construction of $T$. Now assume that $C$ is not necessarily a descendant of $B$. If $A \in \cC_n$ the least common ancestor of $B$ and $C$, the geodesic connecting $y_B$ with $y_C$ in $T_m$ goes through $y_A$ and therefore
\begin{align*}
d_X(\varphi(y_B),\varphi(y_C)) & = d_X(c_B,c_C) \leq d_X(c_B,c_A) + d_X(c_A,c_C) \\
 & \leq 4\epsilon_0 \bigl(d_T(y_B,y_A)^\frac{1}{d} + d_T(y_A,y_C)^\frac{1}{d}\bigr) \\
 & \leq 8 \epsilon_0 d_T(y_B,y_C)^\frac{1}{d} \ .
\end{align*}
In the second step, the map $\varphi$ is linearly extended to the segments of $T_m$: Let $B'$ be the parent of $B$ corresponding to vertices in $T_m$ and let $\gamma_T : [0,1] \to T_m$ and $\gamma_X : [0,1] \to X$ be paths connecting $y_{B'}$ with $y_B$ and $c_{B'}$  with $c_B$ respectively such that $d_T(\gamma_T(a),\gamma_T(b)) = d_T(y_{B'},y_B)|a-b|$ and $d_X(\gamma_X(a),\gamma_X(b)) \leq C_X d_X(c_{B'},c_B)|a-b|$, where $C_X$ is the constant of quasiconvexity of $X$. Now on the segment connecting $y_{B'}$ with $y_B$ in $T$ the map $\varphi$ is defined by $\varphi(\gamma_T(t)) \defl \gamma_X(t)$. Similar to the estimates above one checks that with this extension we obtain a $\frac{1}{d}$-H\"older map $\varphi : T_m \to X$ with a uniform bound on the H\"older constant. 

Starting with $T_1$ and extending this map successively to $T_m$ for $m > 1$ we obtain a H\"older map $\varphi : T' \defl \bigcup_{m \geq 1} T_m \subset T \to X$ of the same regularity. Because $T'$ is dense in $T$, $\varphi$ can be extended to a map on $T$ of the same regularity. Since the image $\varphi(T)$ contains in particular all the centers of balls in $\cC_k$, it is clear that $Y$ is contained in the closure of $\varphi(T)$. Because $T$ is compact this implies $\bar Y \subset \varphi(T)$ and concludes the proof.
\end{proof}

\section{Proof of Theorem~\ref{main_thm}}

The general idea of the proof of Theorem \ref{main_thm} is based on a modification of the construction of the standard four corner $\frac{1}{4}$ Cantor set $C$. It is well known that $C$ is not rectifiable and  $\dim_{\rm H}(C) = \underline{\dim}_{\rm box}(C) =\overline{\dim}_{\rm box}(C) = 1$. We shall modify the construction of $C$ by pushing down the lower box-counting dimension to $0$ while still retaining the unrectifiability property. This is achieved by an iterative construction on two different alternating collections of scales. For all scales in the first collection the relative size is kept large to guarantee the unrectifiability property. For the second collection of scales the relative size is drastically reduced to achieve that $\dim_{\rm H}(K) =0$. The proof is carried out in two steps: in the first step we give the construction for $d=1$; in the second step we construct $K^d$ for general $d$. In this section we assume that $\R^2$ is equipped with the sup-norm, so that the side length of squares agrees with their diameter and in the definition of $N(K,\epsilon)$ we use cover by squares with side length $\epsilon$ instead of balls with radius $\epsilon$.

\subsection{Construction of $K^1$}

We first describe the construction of $K \defl K^1$ in Theorem~\ref{main_thm} depending on a parameter $\frac{1}{2} < \gamma < 1$ and later we modify this to obtain $K^d$. The compact set $K \subset \R^2$ is obtained as $K = \bigcap_{k \geq 0} K_{k}$ where $K_{k}$ is the union of a collection $\cC_{k}$ of $4^k$ disjoint closed squares that are constructed iteratively. Each square in $\cC_{k}$ has side length $\ell_{k}$. In order to describe $K_{k}$ consider a strictly increasing sequence $(k_n)_{n \geq 0}$ of nonnegative integers with $k_0 = 0$. This sequence will be determined later but note that it does not depend on $d$. First $\cC_{0} = \{K_0\}$ where $K_0 \defl [0,1]^2$ and hence $\ell_0 = 1$. In case $k_{2n} \leq k < k_{2n+1}$ for some $n \geq 0$, then each square in $\cC_{k}$ is replaced by the $4$ corner-squares with side length $\ell_{k+1} = \frac{1}{4^\gamma} \ell_{k}$. Note that these squares are disjoint because $\frac{1}{4^\gamma} < \frac{1}{2}$ by the restriction on $\gamma$. These new squares compose $\cC_{k+1}$. In case $k_{2n-1} \leq k < k_{2n}$ for some $n \geq 1$, then each square in $\cC_{k}$ is replaced by the $4$ corner squares of side length $\ell_{k+1} = \frac{1}{4^n} \ell_{k}$. In the following Lemma are estimates of $\ell_{k}$.

\begin{Lem}
	\label{sidelength_lem}
For $n \geq 1$ it holds that
\begin{alignat*}{2}
	4^{(\gamma-n+1) k_{2n-2}+(n-\gamma) k_{2n-1}-nk_{2n}} & \leq \ell_{k_{2n}} && \leq 4^{(n-\gamma) k_{2n-1} - nk_{2n}} \ , \\
	4^{(n-\gamma)k_{2n-1} + (\gamma - n)k_{2n}-\gamma k_{2n+1}} & \geq \ell_{k_{2n+1}} && \geq 4^{(\gamma -n)k_{2n}-\gamma k_{2n+1}} \ .
\end{alignat*}
\end{Lem}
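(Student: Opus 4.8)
The plan is to reduce all four inequalities to one elementary estimate on the per-step shrinking factors, followed by a telescoping over the last one or two ``blocks'' of scales.

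\emph{Setting up the step ratios.} I would first set $f_j \defl \ell_j/\ell_{j-1}$ for $j \geq 1$, so that $\ell_k = \prod_{j=1}^k f_j$ because $\ell_0 = 1$. By construction $f_j = 4^{-\gamma}$ whenever $k_{2m} < j \leq k_{2m+1}$ for some $m \geq 0$, and $f_j = 4^{-m}$ whenever $k_{2m-1} < j \leq k_{2m}$ for some $m \geq 1$. The single observation driving the proof is that $\gamma < 1 \leq m$, so $4^{-m} \leq f_j \leq 4^{-\gamma}$ for every $j \leq k_{2m}$. Multiplying these bounds over $j = 1,\dots,k$ yields, for every $m \geq 1$ and every $k \leq k_{2m}$,
\begin{equation*}
4^{-mk} \leq \ell_k \leq 4^{-\gamma k}.
\end{equation*}
In particular $\ell_{k_{2n-1}} \leq 4^{-\gamma k_{2n-1}}$, $\ell_{k_{2n-2}} \geq 4^{-(n-1)k_{2n-2}}$ (with the convention $\ell_{k_0} = 1$ when $n=1$), and $\ell_{k_{2n}} \geq 4^{-nk_{2n}}$.

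\emph{Telescoping and substitution.} On the block $k_{2n-1} < j \leq k_{2n}$ every factor equals $4^{-n}$, so $\ell_{k_{2n}} = \ell_{k_{2n-1}}\, 4^{-n(k_{2n}-k_{2n-1})}$; inserting $\ell_{k_{2n-1}} \leq 4^{-\gamma k_{2n-1}}$ gives the upper bound $\ell_{k_{2n}} \leq 4^{(n-\gamma)k_{2n-1}-nk_{2n}}$. Going back one more block, every factor on $k_{2n-2} < j \leq k_{2n-1}$ equals $4^{-\gamma}$, so $\ell_{k_{2n}} = \ell_{k_{2n-2}}\, 4^{-\gamma(k_{2n-1}-k_{2n-2})}\, 4^{-n(k_{2n}-k_{2n-1})}$; inserting $\ell_{k_{2n-2}} \geq 4^{-(n-1)k_{2n-2}}$ and collecting exponents gives the lower bound $\ell_{k_{2n}} \geq 4^{(\gamma-n+1)k_{2n-2}+(n-\gamma)k_{2n-1}-nk_{2n}}$. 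Finally, every factor on $k_{2n} < j \leq k_{2n+1}$ equals $4^{-\gamma}$, so $\ell_{k_{2n+1}} = \ell_{k_{2n}}\, 4^{-\gamma(k_{2n+1}-k_{2n})}$; substituting the upper bound for $\ell_{k_{2n}}$ just obtained gives the upper bound for $\ell_{k_{2n+1}}$, and substituting $\ell_{k_{2n}} \geq 4^{-nk_{2n}}$ gives its lower bound.

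I do not expect any real obstacle here: the argument is bookkeeping, and the only delicate points are keeping the indexing of the alternating blocks $(k_{2n})$ straight and invoking the trivial inequality $\gamma < 1 \leq m$ to sandwich the ratios $f_j$. In the writeup I would isolate the estimate $4^{-mk} \leq \ell_k \leq 4^{-\gamma k}$ as a one-line claim and then carry out the three substitutions above, each of which is a short exponent computation.
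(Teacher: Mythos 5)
Your proof is correct and follows essentially the same route as the paper: exact telescoping over the last block(s) of scales combined with the crude per-step bounds $4^{-m}\le \ell_j/\ell_{j-1}\le 4^{-\gamma}$ coming from $\gamma<1\le m$. The only cosmetic difference is that you obtain the lower bound for $\ell_{k_{2n}}$ by telescoping two blocks back from $\ell_{k_{2n-2}}$ in one step, while the paper routes it through its lower bound for $\ell_{k_{2n-1}}$ (including the same $n=1$, $k_0=0$ check); the computation is identical.
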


\begin{proof}
In order to estimate $\ell_{k_{2n}}$ we note that in each step from $k_{2n-1}$ to $k_{2n}$ the length of the squares get multiplied by a factor $\frac{1}{4^n}$ and in each step before $k_{2n-1}$ the length gets multiplied by a factor of at most $\frac{1}{4^\gamma}$. Thus
\[
\ell_{k_{2n}} \leq 4^{-\gamma k_{2n-1}} 4^{-n(k_{2n}-k_{2n-1})} = 4^{(n-\gamma)k_{2n-1}-nk_{2n}} \ .
\]
To obtain the upper bound for $\ell_{k_{2n+1}}$ note that
\begin{align*}
\ell_{k_{2n+1}} & = \ell_{k_{2n}}4^{-\gamma(k_{2n+1}-k_{2n})} \leq 4^{(n-\gamma)k_{2n-1} + (\gamma - n)k_{2n}-\gamma k_{2n+1}} \ .
\end{align*}
Similarly to the upper bound for $\ell_{k_{2n}}$ we obtain the lower bound
\[
\ell_{k_{2n+1}} \geq  4^{-n k_{2n}} 4^{-\gamma(k_{2n+1}-k_{2n})} = 4^{(\gamma-n) k_{2n}-\gamma k_{2n+1}} \ .
\]
This implies the following lower bound for $\ell_{k_{2n}}$
\begin{align*}
\ell_{k_{2n}} & \geq \ell_{k_{2n-1}}4^{-n(k_{2n}-k_{2n-1})} = 4^{(\gamma-n+1) k_{2n-2}+(n-\gamma) k_{2n-1}-nk_{2n}} \ .
\end{align*}
This last estimate also holds for $n=1$ since $k_0=0$.
\end{proof}

We define $k_0=0$, $k_1=1$ and
\begin{align*}
k_{2n} & \defl \left\lceil\frac{n}{1-\gamma}k_{2n-1}\right\rceil \ , \\
k_{2n+1} & \defl \left\lceil \frac{1}{1-\gamma}\left( \log_4(n) + \sum_{i=1}^{2n}(-1)^{i} (\lceil \tfrac{i}{2}\rceil-\gamma) k_i\right)\right\rceil \ ,
\end{align*}
for $n \geq 1$.  Here the notation $\lceil x\rceil$ stands for the smallest integer that is greater or equal to $x$.

\medskip

Note that $k_{2n} > k_{2n-1}$ since $\gamma > 0$. We have also $k_{2n+1} > k_{2n}$. This is because
\begin{align*}
(1-\gamma)k_{2n+1} & \geq ((\lceil \tfrac{2n}{2}\rceil-\gamma)k_{2n}-(\lceil \tfrac{2n-1}{2}\rceil-\gamma)k_{2n-1}) + \cdots \\ 
 & \qquad + ((1-\gamma)k_{2}-(1-\gamma)k_{1}) \\
 & \geq (n-\gamma)(k_{2n}-k_{2n-1}) \\
 & \geq (n-\gamma)k_{2n} - (1-\gamma)k_{2n} \\
 & = (n-1)k_{2n} \ .
\end{align*}
Above we used that $(1-\gamma)k_{2n} \geq (n-\gamma)k_{2n-1}$. Therefore $k_{n+1} \geq cnk_n$ for some $c > 0$ and all $n \geq 1$. 
Hence
\[
1-\gamma = (n-\gamma)\frac{k_{2n}}{k_{2n+1}} + \epsilon_n \ ,
\]
where
\[
|\epsilon_n| \lesssim \frac{nk_{2n-1}}{k_{2n+1}} \leq \frac{n}{c^2(2n-1)2n} \lesssim \frac{1}{n} \ .
\]
Thus
\begin{equation}
\label{sequence_def}
\lim_{n\to\infty} \frac{n}{2}\frac{k_n}{k_{n+1}} = 1-\gamma \ .
\end{equation}
We now give a precise estimate for $\ell_{2n+1}$. For $n \geq 1$ it holds
\begin{align*}
\ell_{k_{2n+1}} & = \ell_{k_{2n-1}}4^{-n(k_{2n}-k_{2n-1})}4^{-\gamma(k_{2n+1}-k_{2n})} \\
 & = \ell_{k_{2n-1}}4^{nk_{2n-1} -(n-\gamma) k_{2n} - \gamma k_{2n+1}} \ .
\end{align*}
Because $\ell_{k_{1}} = 4^{-\gamma}\ell_{k_0} = 4^{-\gamma}$ and thus $\ell_{k_3} = 4^{(1-\gamma) - (1-\gamma)k_2 - \gamma k_3}$, it follows
\begin{align}
\nonumber
\ell_{k_{2n+1}} & = 4^{- \gamma k_{2n+1} - \sum_{i=1}^{2n} (-1)^i (\lceil\frac{i}{2}\rceil-\gamma)k_i} \\
\nonumber
 & \asymp 4^{- \gamma k_{2n+1} - \log_4(n) - (1-\gamma)k_{2n+1}} \\
\label{preciselength_eq}
 & = \tfrac{1}{n}4^{-k_{2n+1}} \ .
\end{align}
In this section we use the notation $a \lesssim b$ for $a,b \geq 0$ to mean that $b \leq ca$ for some $c \geq 0$ depending only on $\gamma$. Similarly, $a \asymp b$ means that $a \lesssim b$ and $b \lesssim a$.

\begin{Lem}
	\label{uperbox_lem}
The estimate $N(K,\epsilon) \leq C(\epsilon) \frac{1}{\epsilon}$ holds for some $C(\epsilon) \geq 0$ such that $\lim_{\epsilon \downarrow 0}C(\epsilon) = 0$.
\end{Lem}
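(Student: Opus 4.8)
The plan is to use the tautological covering of $K$ by the squares of the generation-$k$ collection $\cC_k$. Since $K\subset K_k=\bigcup_{Q\in\cC_k}Q$ with $\sharp\cC_k=4^k$ and every $Q\in\cC_k$ is a square of side length $\ell_k$, each such $Q$ is contained in a single square of side length $\epsilon$ as soon as $\ell_k\le\epsilon$; hence $N(K,\epsilon)\le 4^k$ whenever $\ell_k\le\epsilon$. Given $0<\epsilon<1$ I would take $k=k(\epsilon)\ge 1$ to be the smallest integer with $\ell_k\le\epsilon$, so that by minimality $\epsilon<\ell_{k-1}$ and, setting $C(\epsilon)\defl 4^k\ell_{k-1}=4\cdot 4^{k-1}\ell_{k-1}$,
\[
N(K,\epsilon)\le 4^k<\frac{4^k\ell_{k-1}}{\epsilon}=\frac{C(\epsilon)}{\epsilon}.
\]
Because $\ell_j\downarrow 0$ we have $k(\epsilon)\to\infty$ as $\epsilon\downarrow 0$, so the whole statement reduces to showing that $4^j\ell_j\to 0$ as $j\to\infty$ (the remaining range $\epsilon\ge 1$ is handled trivially, e.g.\ with $C(\epsilon)\defl\epsilon$, and is irrelevant for the limit).

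To prove $4^j\ell_j\to 0$ I would track how this quantity changes from one generation to the next. Along a first-collection block $k_{2n}\le j<k_{2n+1}$ the construction gives $\ell_{j+1}=4^{-\gamma}\ell_j$, hence $4^{j+1}\ell_{j+1}=4^{1-\gamma}\,4^j\ell_j$, so $j\mapsto 4^j\ell_j$ is strictly increasing along such a block (as $\gamma<1$); along a second-collection block $k_{2n-1}\le j<k_{2n}$ the construction gives $\ell_{j+1}=4^{-n}\ell_j$, hence $4^{j+1}\ell_{j+1}=4^{1-n}\,4^j\ell_j$, so $j\mapsto 4^j\ell_j$ is non-increasing there. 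Consequently, on each interval $[k_{2n},k_{2n+2}]$ — a first-collection block immediately followed by a second-collection block — the sequence $4^j\ell_j$ attains its maximum at the common endpoint $j=k_{2n+1}$. By the sharp estimate \eqref{preciselength_eq} one has $4^{k_{2n+1}}\ell_{k_{2n+1}}\asymp\tfrac1n$, so $4^j\ell_j\lesssim\tfrac1n$ for all $j\in[k_{2n},k_{2n+2})$ with $n\ge 1$; since $k_{2n}\to\infty$, the index $n$ associated with $j$ in this way tends to infinity as $j\to\infty$, and therefore $4^j\ell_j\to 0$.

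Combining the two steps, $N(K,\epsilon)\epsilon\le C(\epsilon)$ with $C(\epsilon)\to 0$ as $\epsilon\downarrow 0$, which is the claim. The point I expect to require the most care is that the crude inequality $N(K,\ell_k)\le 4^k$ is by itself useless here: $4^k\ell_k$ is not monotone in $k$ and in fact grows by a definite factor $4^{1-\gamma}$ at every step of each first-collection block, so $N(K,\epsilon)\epsilon$ is largest precisely at the ends $j=k_{2n+1}$ of those blocks — and it is exactly there that one needs the precise value \eqref{preciselength_eq} of $\ell_{k_{2n+1}}$, coming from the specific choice of the sequence $(k_n)$, to defeat this growth and push the product to zero.
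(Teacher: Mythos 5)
Your proof is correct and follows essentially the same route as the paper: both reduce the claim to showing $4^k\ell_k \to 0$ by bounding this product on each block of scales through its value at the odd-indexed endpoints $k_{2n+1}$, where \eqref{preciselength_eq} gives $4^{k_{2n+1}}\ell_{k_{2n+1}} \asymp \tfrac1n$, and then handle a general $\epsilon$ by comparing it with the neighboring scales $\ell_{k}$, $\ell_{k-1}$ (the paper uses $\ell_{k+1}\le\epsilon\le\ell_k$ and $N(K,\epsilon)\le 4N(K,\ell_k)$, which is the same estimate up to an index shift). Your monotonicity observation is just a repackaging of the paper's direct computation of $4^k\ell_k$ in terms of its value at the block endpoint, so no essential difference.
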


\begin{proof}
Note first that for any $k$ it holds that $N(K,\ell_k) = 4^k$: First $N(K,\ell_k) \leq 4^k$ is clear because of the obvious cover $\cC_k$. On the other hand, the corners of any square in $\cC_k$ belong to $K$ and there are $4^{k+1}$ such corners. A square in $\R^2$ with side length $\ell_k$ can cover at most $4$ such corners and therefore $N(K,\ell_k) \geq 4^k$.

We need to compare $\ell_k$ with $4^k$. If $k_{2n} \leq k \leq k_{2n+1}$ (and $k \neq 0$), then it follows from \eqref{preciselength_eq}
\begin{align*}
N(K,\ell_{k})\ell_{k} & = 4^k \ell_{k_{2n+1}}4^{\gamma(k_{2n+1}-k)} = 4^{k_{2n+1}}\ell_{k_{2n+1}} 4^{k-k_{2n+1} + \gamma(k_{2n+1}-k)} \\
& = 4^{k_{2n+1}}\ell_{k_{2n+1}} 4^{(\gamma-1)(k_{2n+1}-k)} \\
& \lesssim \tfrac{1}{n} \ .
\end{align*}
Now fix $k_{2n-1} \leq k \leq k_{2n}$. Similarly to the estimate above
\begin{align*}
N(K,\ell_{k})\ell_{k} & = 4^k \ell_{k_{2n-1}}4^{-n(k-k_{2n-1})} = 4^{k_{2n-1}}\ell_{k_{2n-1}}4^{k-k_{2n-1} - n(k-k_{2n-1})} \\
 & \leq 4^{k_{2n-1}}\ell_{k_{2n-1}} \\
 & \lesssim \tfrac{1}{n-1} \ .
\end{align*}
As a consequence we note that  $N(K,\ell_{k})\ell_{k} \to 0$ as $k \to \infty$.

Next consider a general $\epsilon >0$ small. 
Assume that $k$ is such that $\ell_{k+1} \leq \epsilon \leq \ell_{k}$. Clearly $\epsilon \to 0$ implies that $k \to \infty$. 

It holds $N(K,\epsilon) \leq N(K,\ell_{k+1}) = 4N(K,\ell_{k})$ and therefore
\begin{align*}
N(K,\epsilon) \epsilon & \leq 4N(K,\ell_{k}) \ell_{k} \frac{\epsilon}{\ell_{k}} \leq 4N(K,\ell_{k}) \ell_{k}
\end{align*}
Thus $\lim_{\epsilon \downarrow 0} N(K,\epsilon) \epsilon = 0$ and this concludes the proof.
\end{proof}

\begin{Lem}
	\label{lowerbox_lem}
$\dim_{\rm H}(K) = \underline{\dim}_{\rm box}(K) = 0$.
\end{Lem}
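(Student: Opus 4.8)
The plan is to establish the two equalities $\dim_{\rm H}(K) = 0$ and $\underline{\dim}_{\rm box}(K) = 0$ by exhibiting, along a carefully chosen sparse sequence of scales, covers of $K$ that are drastically more efficient than the generic bound $N(K,\epsilon) \lesssim \epsilon^{-1}$ of Lemma~\ref{uperbox_lem}. Since $\dim_{\rm H}(K) \leq \underline{\dim}_{\rm box}(K)$ always holds, and the lower box dimension is manifestly nonnegative, it suffices to prove $\underline{\dim}_{\rm box}(K) = 0$, i.e.\ to find a sequence $\epsilon_j \downarrow 0$ with $\log N(K,\epsilon_j) / (-\log \epsilon_j) \to 0$.

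The natural candidate scales are $\epsilon_j = \ell_{k_{2n}}$, the side lengths at the \emph{end} of the second type of construction step, where the squares have just been shrunk by the large factors $4^{-n}$. At such a scale we have $N(K,\ell_{k_{2n}}) = 4^{k_{2n}}$ (as noted in the proof of Lemma~\ref{uperbox_lem}), so the ratio to control is $\frac{k_{2n}\log 4}{-\log \ell_{k_{2n}}}$. Using the upper bound from Lemma~\ref{sidelength_lem}, $\ell_{k_{2n}} \leq 4^{(n-\gamma)k_{2n-1} - nk_{2n}}$, gives $-\log_4 \ell_{k_{2n}} \geq nk_{2n} - (n-\gamma)k_{2n-1}$. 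Combined with the growth estimate $k_{n+1} \geq cnk_n$ established in the text (so $k_{2n-1}$ is negligible compared to $k_{2n}$, and more importantly compared to $nk_{2n}$), one gets $-\log_4 \ell_{k_{2n}} \gtrsim nk_{2n}$, whence the ratio is $\lesssim \frac{k_{2n}}{nk_{2n}} = \frac{1}{n} \to 0$. This yields $\underline{\dim}_{\rm box}(K) = 0$ and hence all three quantities vanish.

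For the Hausdorff dimension one could alternatively argue directly, which is safer since box-counting covers by equal-size squares are slightly wasteful: fix $t > 0$ and use the cover $\cC_{k_{2n}}$ consisting of $4^{k_{2n}}$ squares of diameter $\ell_{k_{2n}}$ to bound $\cH^t_{\ell_{k_{2n}}}(K) \leq 4^{k_{2n}} \ell_{k_{2n}}^t \leq 4^{k_{2n}} \cdot 4^{t((n-\gamma)k_{2n-1} - nk_{2n})} = 4^{k_{2n}(1 - tn) + t(n-\gamma)k_{2n-1}}$. Since $k_{2n-1} = o(k_{2n})$ by the growth estimate, for $n$ large enough that $tn > 1$ the exponent tends to $-\infty$, so $\cH^t(K) = 0$; as $t > 0$ was arbitrary, $\dim_{\rm H}(K) = 0$.

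The main obstacle, and the only place requiring care, is confirming that $k_{2n-1}$ is genuinely negligible next to $n k_{2n}$ in these estimates — this is exactly what the inequality $k_{n+1} \geq c n k_n$ (equivalently $k_{2n}/k_{2n-1} \to \infty$ and the refined asymptotic \eqref{sequence_def}) is designed to deliver, so the step reduces to quoting that estimate and checking the bookkeeping. A secondary point is that one should verify $\ell_{k_{2n}} \to 0$ so the scales are admissible, which is immediate from the same bounds. Everything else is the routine substitution of the inequalities from Lemma~\ref{sidelength_lem} and the definitions of $k_{2n}, k_{2n+1}$.
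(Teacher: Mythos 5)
Your proposal is correct and follows essentially the same route as the paper: both use the scales $\ell_{k_{2n}}$ with $N(K,\ell_{k_{2n}}) \leq 4^{k_{2n}}$, the upper bound on $\ell_{k_{2n}}$ from Lemma~\ref{sidelength_lem}, and the boundedness of $(n-\gamma)k_{2n-1}/k_{2n}$ (via $k_{n+1} \geq cnk_n$) to get the box-counting ratio $\lesssim 1/n$, together with the same direct estimate of $\cH^t_{\ell_{k_{2n}}}(K)$ for the Hausdorff dimension.
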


\begin{proof}
To prove the lemma we shall use the scales $\ell_{k_{2n}}$. Clearly, there are $4^{k_{2n}}$ squares in $\cC_{k_{2n}}$ with diameter $\ell_{k_{2n}}$. Thus $N(K,\ell_{k_{2n}}) \leq 4^{k_{2n}}$. With the estimates in Lemma~\ref{sidelength_lem} we obtain $\ell_{k_{2n}} \leq 4^{(n-\gamma) k_{2n-1} - nk_{2n}}$. Thus
\begin{align*}
\frac{\log_4 N(K,\ell_{k_{2n}})}{\log_4 \ell_{k_{2n}}^{-1}} \leq \frac{k_{2n}}{k_{2n}(n - (n-\gamma) k_{2n-1}/k_{2n})} = \frac{1}{n - (n-\gamma) k_{2n-1}/k_{2n}} \ .
\end{align*}
Because $(n-\gamma) k_{2n-1}/k_{2n}$ is bounded in $n$ by \eqref{sequence_def} it follows that
\[
\lim_{n \to \infty} \frac{\log_4 N(K,\ell_{k_{2n}})}{\log_4 \ell_{k_{2n}}^{-1}} = 0
\]
and thus $\underline{\dim}_{\rm box}(K) = 0$. It is a general fact that $\dim_{\rm H}(K) \leq \underline{\dim}_{\rm box}(K)$ and hence also $\dim_{\rm H}(K) = 0$. But this can also be verified by a direct computation: For any $t > 0$ and all $n \geq 1$
\begin{align*}
\cH^t_{\ell_{k_{2n}}}(K) & \leq \cH^t_{\ell_{k_{2n}}}(K_{k_{2n}}) \leq 4^{k_{2n}}4^{tk_{2n}((n-\gamma) k_{2n-1}/k_{2n} - n)} \\
& = 4^{k_{2n}(1 + t(n-\gamma) k_{2n-1}/k_{2n} - tn)} \ ,
\end{align*}
and this converges to zero  because the exponent goes to $-\infty$ for $n \to \infty$.
\end{proof}

\begin{Rem}
Using the scales the scales $\ell_{k_{2n+1}}$ one can show that $\overline{\dim}_{\rm box}(K) \geq 1$. By Lemma \ref{uperbox_lem} we obtain that in fact $\overline{\dim}_{\rm box}(K) = 1$.
\end{Rem}

In order to show that $K$ can not be covered by countably many rectifiable curves, we shall use  the Theorem of Jones \cite{J}.

\begin{Lem}
	\label{betainfinity_lem}
$\beta^2(K) = \infty$, and therefore $K$ can not be covered by a curve $\gamma : [0,1] \to \R^2$ of finite length.
\end{Lem}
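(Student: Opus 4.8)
The plan is to estimate $\beta^2(K)$ from below by summing the contributions of dyadic squares at the scales $\ell_{k_{2n}}$, i.e.\ at the ``fat'' scales where the four-corner replacement with ratio $\tfrac14$ (rather than $\tfrac{1}{4^\gamma}$) has just been completed. The key geometric observation is the same one that makes the standard four-corner Cantor set unrectifiable: when a square $Q$ of side length $\ell_{k}$ in $\cC_{k}$ is replaced by its four corner squares of side length $\ell_{k+1}$, the set $K\cap Q$ is contained in the four corner regions, and any strip covering these four clusters must have width comparable to $\ell_k$ whenever $\ell_{k+1}/\ell_k$ is bounded away from $0$, simply because the four clusters are not collinear and sit near the four corners of $Q$. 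Thus for every $Q\in\cC_{k}$ with $k_{2n-1}\le k<k_{2n}$ (or more conveniently for the dyadic square $Q'$ of comparable size containing such a $Q$), one gets $\beta_K(3Q')\ge c$ for a constant $c=c(\gamma)>0$ independent of $Q'$ and $n$.

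Concretely I would proceed as follows. First, fix $n$ and look at the generation $k=k_{2n}$: there are $4^{k_{2n}}$ squares in $\cC_{k_{2n}}$, each of side length $\ell_{k_{2n}}$, and they are $\ell_{k_{2n}}$-separated (indeed much more). For each such square $Q$, the previous replacement step (from $\cC_{k_{2n}-1}$ to $\cC_{k_{2n}}$, which uses ratio $\tfrac14$ since $k_{2n}-1\ge k_{2n-1}$) shows that $K\cap Q_{\rm parent}$ lies in four sub-clusters near the corners of the parent square of side $\ell_{k_{2n}-1}=4\ell_{k_{2n}}$, and hence there is a dyadic square $Q'$ with $\ell(Q')\asymp \ell_{k_{2n}}$, $3Q'\supset Q_{\rm parent}$, and $\beta_K(3Q')\ge c$. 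Distinct squares in $\cC_{k_{2n}}$ give essentially distinct such dyadic squares $Q'$ (with bounded overlap), so the number of dyadic $Q'$ at scale $\asymp\ell_{k_{2n}}$ with $\beta_K(3Q')\ge c$ is $\gtrsim 4^{k_{2n}}$. Therefore the contribution of this scale to $\beta^2(K)$ is
\[
\sum_{Q'\colon \ell(Q')\asymp\ell_{k_{2n}}} \beta_K^2(3Q')\,\ell(Q') \;\gtrsim\; c^2\,4^{k_{2n}}\,\ell_{k_{2n}} \;=\; c^2\,N(K,\ell_{k_{2n}})\,\ell_{k_{2n}} \;\asymp\; \frac{c^2}{n}\,,
\]
using $4^{k_{2n}}\ell_{k_{2n}}=N(K,\ell_{k_{2n}})\ell_{k_{2n}}$ and the estimate $N(K,\ell_k)\ell_k\asymp\tfrac1{n-1}$ from the proof of Lemma~\ref{uperbox_lem} (or, more carefully, a one-sided lower bound of the same order, which follows from Lemma~\ref{sidelength_lem}). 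Actually I should be slightly careful: I want a \emph{lower} bound on $4^{k_{2n}}\ell_{k_{2n}}$, and the lower estimate in Lemma~\ref{sidelength_lem} together with \eqref{sequence_def} gives $4^{k_{2n}}\ell_{k_{2n}}\gtrsim \tfrac1n$ (one checks the exponent $k_{2n}+(\gamma-n+1)k_{2n-2}+(n-\gamma)k_{2n-1}-nk_{2n}$ behaves like $-\log_4 n + O(1)$ via the asymptotics $\tfrac n2 \tfrac{k_n}{k_{n+1}}\to 1-\gamma$).

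To finish, sum over $n$: since the dyadic scales $\ell_{k_{2n}}$ for different $n$ are far apart (no two of these ``fat-completion'' scales are comparable for large $n$, because $k_{2n+2}\gg k_{2n}$), the corresponding families of dyadic squares are disjoint, so
\[
\beta^2(K)\;\gtrsim\;\sum_{n\ge 1}\frac{c^2}{n}\;=\;\infty\,.
\]
Hence $\beta(K)=\infty$, and by the Analyst's Traveling Salesman theorem of Jones \cite{J}, the bounded set $K$ cannot be covered by a rectifiable curve $\gamma:[0,1]\to\R^2$.

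I expect the main obstacle to be the bookkeeping that turns the soft ``four non-collinear corner clusters force $\beta\ge c$'' statement into an honest lower bound summed over \emph{dyadic} squares: one must choose, for each $Q\in\cC_{k_{2n}}$, a dyadic square $Q'$ of the right size with $3Q'$ covering the relevant parent cluster, verify that $\beta_K(3Q')$ is bounded below by a constant depending only on $\gamma$ (this needs the ratio $\ell_{k_{2n}}/\ell_{k_{2n}-1}=\tfrac14$, not the tiny ratios used at scales $k_{2n-1}\le k<k_{2n}$), and control the multiplicity with which different $Q$ can share the same $Q'$. A clean way to avoid some of this is to note it suffices to exhibit, at each of infinitely many well-separated scales $r_n\downarrow 0$, a family of $\gtrsim 4^{k_{2n}}$ boundedly-overlapping dyadic squares of side $\asymp r_n$ each with $\beta_K\ge c$; the separation of the scales then makes the double sum over $(n,Q')$ bounded below by $\sum_n c^2 4^{k_{2n}} r_n \gtrsim \sum_n \tfrac1n=\infty$, which is exactly the divergence we need.
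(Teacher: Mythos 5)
Your overall strategy (find infinitely many well-separated scales at which $\gtrsim 4^{k}$ boundedly-overlapping dyadic squares have $\beta_K(3Q)\ge c$, and sum a harmonic series) is the same as the paper's, but you run it at the wrong scales, and this is fatal rather than a bookkeeping issue. You work at $k=k_{2n}$ and assert that the step from $\cC_{k_{2n}-1}$ to $\cC_{k_{2n}}$ uses ratio $\tfrac14$; in fact that step lies in the range $k_{2n-1}\le k<k_{2n}$ and uses the ratio $4^{-n}$, so $\ell_{k_{2n}-1}=4^{n}\ell_{k_{2n}}$, a dyadic square $Q'$ with $\ell(Q')\asymp\ell_{k_{2n}}$ can never have $3Q'\supset Q_{\rm parent}$, and the ``ratio bounded away from $0$'' hypothesis behind your geometric lemma fails there. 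Worse, the quantitative claim $4^{k_{2n}}\ell_{k_{2n}}\gtrsim\tfrac1n$ is false: Lemma~\ref{sidelength_lem} gives $4^{k_{2n}}\ell_{k_{2n}}\le 4^{(1-n)k_{2n}+(n-\gamma)k_{2n-1}}$, and since $(1-\gamma)k_{2n}\approx nk_{2n-1}$ this exponent is $\approx(2-n-\gamma)k_{2n}\to-\infty$ superexponentially fast; equivalently $4^{k_{2n}}\ell_{k_{2n}}\asymp\tfrac1n\,4^{-(1-\gamma)(k_{2n+1}-k_{2n})}$. This smallness of $N(K,\ell_{k_{2n}})\ell_{k_{2n}}$ is precisely what Lemma~\ref{lowerbox_lem} exploits to force $\underline{\dim}_{\rm box}(K)=0$, so the scales $\ell_{k_{2n}}$ contribute a convergent (indeed negligible) amount to $\beta^2(K)$; Lemma~\ref{uperbox_lem} only supplies the one-sided bound $N(K,\ell_k)\ell_k\lesssim\tfrac1{n-1}$ there, not a matching lower bound, and your reading of the exponent as $-\log_4 n+O(1)$ is incorrect.

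The repair is to run your argument at the other family of scales, which is what the paper does: take $k'_n=k_{2n+1}-1$, the last level of the $4^{-\gamma}$-subdivision phase. There the subdivision ratio $4^{-\gamma}\in(\tfrac14,\tfrac12)$ is bounded away from $0$, so every dyadic square $Q$ of side $2^{-i_n}\asymp\ell_{k'_n}$ meeting the corner set of $\cC_{k'_n}$ has $3Q$ containing a full square of $\cC_{k_{2n+1}}$ of side $\ge 2^{-i_n-2}$, whence $\beta_K(3Q)\ge\tfrac1{12}$; the counting argument you describe (at most $4$ corners per dyadic square) gives $\gtrsim 4^{k_{2n+1}}$ such squares, and the precise estimate \eqref{preciselength_eq}, $\ell_{k_{2n+1}}\asymp\tfrac1n 4^{-k_{2n+1}}$, yields a contribution $\gtrsim\tfrac1n$ per $n$, so the sum diverges. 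With the scales shifted from $k_{2n}$ to $k_{2n+1}-1$ your outline becomes essentially the paper's proof; as written, the divergence you need simply is not there.
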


\begin{proof}
Let $n \geq 1$ and set $k'_n \defl k_{2n+1}-1$. Le $i_n$ be the unique integer such that $2^{-i_n} \leq \ell_{k'_n} < 2^{-i_n+1}$. Because $\ell_{k_{2n+1}} = 4^{-\gamma}\ell_{k'_n}$ and $\frac{1}{4} < 4^{-\gamma} < \frac{1}{2}$ it holds that
\begin{equation}
	\label{dyadic_eq}
2^{-i_n-2} \leq \ell_{k_{2n+1}} \leq 2^{-i_n} \ .
\end{equation}
Let $A_n$ be the corner set of squares in $\cC_{k'_{n}}$. It holds: $A_n \subset K$, $\sharp A = 4^{k'_n + 1}$ and $|a-b| \geq \ell_{k'_n} \geq 2^{-i_n}$ for different corners $a,b \in A_n$ of the same square in $\cC_{k'_n}$. Let us denote by $\cD_{i_n}$ the collection of dyadic squares $Q$ in $\R^2$ with side length $2^{-i_n}$ such that $Q \cap A_n \neq \emptyset$. By the reasoning as in the proof of Lemma~\ref{uperbox_lem} a square in $\cD_{i_n}$ can cover at most $4$ points of $A_n$ and thus
\begin{equation}
	\label{numbersquaresestimate_eq}
	\sharp\cD_{i_n} \geq 4^{k'_n} = 4^{k_{2n+1}-1} \ .
\end{equation}
Because of \eqref{dyadic_eq}, for any square $Q \in \cD_{i_n}$, the square $3Q$ contains some $Q' \in \cC_{k_{2n+1}}$ of side length $\ell_{k_{2n+1}} \geq 2^{-i_n-2}$. Thus
\begin{equation}
\label{widthestimate_eq}
\beta_K(3Q) = \frac{\omega(3Q)}{\ell(3Q)} \geq \frac{\ell(Q')}{3\ell(Q)} \geq \frac{2^{-i_n-2}}{3\cdot 2^{-i_n}} = \frac{1}{12} \ .
\end{equation}
Therefore with \eqref{dyadic_eq}, \eqref{numbersquaresestimate_eq}, \eqref{widthestimate_eq} and \eqref{preciselength_eq}
\begin{align*}
\sum_{Q \in \cD_{i_n}} \beta_K^2(3Q)\ell(Q) & \gtrsim (\sharp \cD_{i_n}) \cdot 2^{-i_n} \geq 4^{k_{2n+1}-1} \ell_{k_{2n+1}} \gtrsim \tfrac{1}{n} \ .
\end{align*}
Thus
\begin{align*}
\beta^2(K) & \gtrsim \sum_{n \geq 1} \frac{1}{n} = \infty \ .
\end{align*}
\end{proof}

We are now ready to prove Theorem~\ref{main_thm} for $d=1$. 

\begin{proof}[Proof of Theorem~\ref{main_thm} for $K^1$]
The statements on the dimension of $K$ are contained in and Lemma~\ref{lowerbox_lem}. The condition \eqref{lim_cond} is covered by Lemma~\ref{uperbox_lem}. Due to Lemma~\ref{betainfinity_lem}, the set $K$ can not be covered by single a curve of finite length. 

\medskip 

Now assume by contradiction that $K$ can be covered by countably many curves $\Gamma_i$, $i \in \N$, of finite length. Taking the closure we may assume that the sets $\Gamma_i$ are compact. Since $\Gamma_1$ can not cover $K$ there exists a point $p_1 \in K \setminus \Gamma_i$. Since $p_1$ has positive distance to $\Gamma_i$, there exists some $j_1$ and a square $Q_1 \subset \cC_{j_1}$ that contains $p_1$ such that $Q_1 \cap \Gamma_1 = \emptyset$. 

Now we define recursively an increasing sequence $j_1 < j_2 < \cdots$ and $Q_1 \supset Q_2 \supset \cdots$ with $Q_i \in \cC_{j_i}$ and $Q_i \cap (\Gamma_1 \cup \cdots \cup \Gamma_i) = \emptyset$. Assume we have constructed this for the index $i-1$. Now since $K$ is composed of disjoint copies of $Q_{i-1} \cap K$, the set $K \cap Q_{i-1}$ can not be covered by a curve of finite length (otherwise $K$ could be covered by a curve of finite length). Thus as for $i=1$ we find $j_i$, $Q_{j_i} \in \cC_{j_i}$ with $Q_{j_i} \subset Q_{j_{i-1}}$ such that $Q_i \cap \Gamma_i = \emptyset$. Thus $Q_i$ is disjoint from all the curves up to $\Gamma_i$. Since $K$ is complete, the set $\bigcap_{i \geq 1} (K \cap Q_i)$ is nonempty and disjoint from all the curves $\Gamma_i$. 

This proves the theorem for $K^1$.
\end{proof}

\subsection{Construction of $K^d$}

Let us recall that the construction of $K=K^1$ above depends on a certain parameter $\frac{1}{2} < \gamma < 1$. For the construction of $K^d = K'$ we will consider another parameter $\frac{1}{2} < \delta \leq \gamma$ and define $K' \subset \R^2$ as follows. 

The compact set $K' \subset \R^2$ is obtained as $K' = \bigcap_{k \geq 0} K_{k}'$ where $K_{k}$ is the union of a collection $\cC_{k}'$ of $4^k$ disjoint closed squares depending on $\gamma$, $\delta$ and the sequence $(k_n)_n$ already defined above for $K$. Each square in $\cC_{k}'$ has side length $\ell_{k}'$. Again $\cC_{0}' = \{K_0'\}$ where $K_0' \defl [0,1]^2$ and hence $\ell_0' = 1$. In case $k_{2n} \leq k < k_{2n+1}$ for some $n \geq 0$, then each square in $\cC_{k}$ is replaced by the $4$ corner-squares with side length $\ell_{k+1}' = \frac{1}{4^\delta} \ell_{k}'$. Again these squares are disjoint because $\frac{1}{4^\delta} < \frac{1}{2}$ by the restriction on $\delta$. These new squares compose $\cC_{k+1}'$. In case $k_{2n-1} \leq k < k_{2n}$ for some $n \geq 1$, then each square in $\cC_{k}'$ is replaced by the $4$ corner squares of side length $\ell_{k+1}' = \frac{1}{4^{n\delta/\gamma}} \ell_{k}'$ (note that $n\delta/\gamma > \frac{1}{2}$). From this construction and the fact that $\ell_0 = \ell_0'$ it is clear that $\ell_{k}' = \ell_k^{\delta/\gamma}$ for each $k$.

Consider the map $F : K' \to K$ defined as follows: There is an obvious correspondence between squares in $\cC_k'$ and $\cC_k$ for all $k$ and let $F_k : K'_k \to K_k$ be the map that sends each square in $\cC_k'$ to its corresponding square in $\cC_k$ by an affine map. We claim that $F_k$ converges uniformly to a map $F$ on $K'$. To see this we shall check that the sequence $(F_m(x))_m$ is a uniformly Cauchy sequence for $x\in K'$. Indeed, pick a point $x \in Q'$ for some square $Q' \in \cC_k'$ with corresponding square $Q \in \cC_k$, then $F_l(x) \in Q$ for all $l \geq k$. So $|F_l(x) - F_m(x)| \leq \diam(Q) = \ell_k$ for all $l,m \geq k$.

Fix $x,y \in K'$ and let $k$ be the largest integer such that $x,y \in Q'$ for some $Q' \in \cC_k'$ but $x$ and $y$ are in different squares $Q_x'$ and $Q_y'$ of $\cC_{k+1}'$. The corresponding squares are denoted by $Q$, $Q_x$, and $Q_y$ respectively. There is a constant $c \geq 1$ depending only on $\gamma$ and $\delta$ such that
\[
c\dist(Q_x',Q_y') \geq \ell_{k}' \qquad \mbox{and} \qquad c\dist(Q_x,Q_y) \geq \ell_{k} \ .
\]
Thus
\begin{align*}
|F(x)-F(y)| & \leq \diam(Q) = \ell_k = \ell_k'^{\gamma/\delta} \leq c^{\gamma/\delta}\dist(Q_x',Q_y')^{\gamma/\delta} \\
 & \leq c^{\gamma/\delta}|x-y|^{\gamma/\delta} \ .
\end{align*}
Notice also that $F: K' \to K$ is a bijection.  By a similar consideration as above, we obtain that $|F^{-1}(x) - F^{-1}(x)| \leq c^{\delta/\gamma}|x-y|^{\delta/\gamma}$. Thus we get for all $x,y \in K'$,
\begin{equation}
\label{hoelderest_eq}
L^{-1}|x-y|^{\gamma/\delta} \leq |F(x)-F(y)| \leq L^{\delta/\gamma}|x-y|^{\gamma/\delta} \ ,
\end{equation}
for some constant $L \geq 1$ depending only on $\gamma$ and $\delta$. This implies as in Proposition~\ref{holder_prop} that for $0 < \epsilon < 1$
\begin{equation} \label{numberofsquares_eq}
L^{-1} N(K',\epsilon) \leq N(K,\epsilon^{\gamma/\delta}) \leq L N(K',\epsilon) \ , 
\end{equation}
for a constant $L$ independent of $\epsilon$.

\begin{proof}[Proof of Theorem~\ref{main_thm}]
Using the corresponding results for $K$, Proposition~\ref{holder_prop} and \eqref{hoelderest_eq} we obtain
\[
0 \leq \dim_{\rm H}(K') \leq \underline{\dim}_{\rm box}(K') \leq \tfrac{\gamma}{\delta}\underline{\dim}_{\rm box}(K) = 0 \ .
\]
Now \eqref{numberofsquares_eq} implies
\[
\limsup_{\epsilon \downarrow 0} N(K',\epsilon) \epsilon^{\gamma/\delta} \leq \limsup_{\epsilon \downarrow 0} N(K,\epsilon^{\gamma/\delta}) \epsilon^{\gamma/\delta} = 0 \ .
\]
The fact that $K'$ cannot be covered by countably many $\frac{\delta}{\gamma}$-H\"older curves will be deduced from the fact that $K$ cannot be covered by countably many Lipschitz curves. 

In order to see this, assume first there is a $\frac{\delta}{\gamma}$-H\"older curve $\sigma : [0,1] \to \R^2$ that covers $K'$ and set $A \defl \sigma^{-1}(K')$. Then $(F \circ \sigma)|_A : A \to K$ is a Lipschitz cover of $K$. This map can be extended as a Lipschitz map to the whole interval $[0,1]$. But this is not possible by our previous result. 

So there is no such curve that covers $K'$. The analogous argument as for $K$ now shows that $K'$ can't be covered by countably many $\frac{\delta}{\gamma}$-H\"older curves. Note that by Corollary \ref{cover_cor} we have that $\overline{\dim}_{\rm box}(K')\geq \frac{\gamma}{\delta}$ and by construction the fraction $\frac{\gamma}{\delta}$ can take any value in $[1,2)$. This finishes the proof of Theorem~\ref{main_thm}.
\end{proof}



\bigskip
\bigskip
\noindent
University of Bern, Mathematical Institute, Sidlerstrasse 5, 3012 Bern, Switzerland

\bigskip
\noindent
\texttt{zoltan.balogh@math.unibe.ch}

\smallskip
\noindent
\texttt{roger.zuest@math.unibe.ch}

\end{document}